\newtheorem{thm}{Theorem}[section]
\newtheorem{lem}[thm]{Lemma}
\newtheorem{cor}[thm]{Corollary}
\newtheorem{conj}[thm]{Conjecture}
\theoremstyle{definition}
\theoremstyle{remark}
\numberwithin{equation}{section}
\newcommand{\uple}[1]{\text{\boldmath${#1}$}}
\begin{document}


\title[New error term for the fourth moment]{New error term for the fourth moment of automorphic $L$-functions }


\author{Olga  Balkanova}
\address{Institute for Applied Mathematics of Russian Academy of Sciences,  Khabarovsk, Russia}
\email{olgabalkanova@gmail.com}

\author{Dmitry Frolenkov}
\address{Steklov Mathematical Institute of Russian Academy of Sciences\\
National Research University Higher School of
Economics, Moscow, Russia}
\email{frolenkov@mi.ras.ru}

\begin{abstract}
We improve the error term in the asymptotic formula for the twisted fourth  moment of automorphic $L$-functions of prime level and weight two proved by Kowalski, Michel and Vanderkam. As a consequence, we obtain a new subconvexity bound in the level aspect and improve the lower bound on proportion of simultaneous non-vanishing.
\end{abstract}

\keywords{fourth moment, primitive forms, simultaneous non-vanishing, subconvexity}
\subjclass[2010]{Primary: 11F12}

\maketitle

\tableofcontents

\section{Introduction}

The fourth moment of automorphic $L$-functions has been studied in  \cite{DFI, KMV} using the large sieve inequality and $\delta$-symbol method.
As an application Duke, Friendlander and Iwaniec proved the subconvexity bound in the level aspect. Another consequence -- simultaneous non-vanishing -- was derived by Kowalski, Michel and Vanderkam.

 In this paper, we optimize several estimates of \cite{KMV}  and compute the explicit dependence of  error terms on the smallest
positive eigenvalue for the Hecke congruence subgroup. This allows us to improve  the results of \cite{DFI, KMV}
 by applying the Kim-Sarnak bound.

We borrow some notations of \cite{DFI, KMV}.  Consider the family $H^{*}_{2}(q)$ of primitive newforms of prime level $q$ and weight $2$.
Every $f \in H_{2}^{*}(q)$ has a Fourier expansion
\begin{equation}
f(z)=\sum_{n\geq 1}\lambda_f(n)n^{1/2}e(nz).
\end{equation}

The associated  $L$-function is defined by
\begin{equation}L(f,s)=\sum_{n \geq 1}\frac{\lambda_f(n)}{n^s}, \quad \Re{s}>1.
\end{equation}
The completed $L$- function
\begin{equation}
\Lambda(f,s)=\left(\frac{\sqrt{q}}{2 \pi}\right)^s\Gamma\left(s+\frac{1}{2}\right)L(f,s)
\end{equation}
can be analytically continued on the whole complex plane. It satisfies the functional equation
\begin{equation} \label{eq: functionalE}
\Lambda(f,s)=\epsilon_f\Lambda(f,1-s), \quad \epsilon_f=\pm1.
\end{equation}
We introduce the natural and harmonic averages
\begin{equation}\label{eq:averages}
\sum_{f \in H_{2}^{*}(q)}^{n}\alpha_f:=\sum_{f \in H_{2}^{*}(q)}\frac{\alpha_f}{|H_{2}^{*}(q)|},
\quad
\sum_{f \in H_{2}^{*}(q)}^{h}\alpha_f:=\sum_{f \in H_{2}^{*}(q)}\frac{\alpha_f}{4\pi \langle f,f\rangle_q},
\end{equation}
where $\langle f,f\rangle_q$ is the Petersson inner product on the space of level $q$ holomorphic modular forms.

The goal of the present paper is to improve the error term in the asymptotic formula for the twisted fourth moment
\begin{equation}
M(l)=\sum_{f \in H_{2}^{*}(q)}^{h}\lambda_f(l)|L(f,1/2+\mu)|^4, \quad \mu \in i\mathbb{R}.
\end{equation}
Our main result is the following.
\begin{thm}\label{mainthm} Let $q$ be a prime and $l<q$. There exists some $B>0$ such that for any $\epsilon>0$
\begin{multline}\label{asymp1}
M(l)=M^D(l)+M^{OD}(l)+M^{OOD}(l)+\\
O_{\epsilon}\left(q^{\epsilon}(1+|\mu|)^B\left(l^{\frac{5-6\theta}{8-8\theta}}q^{-\frac{1-2\theta}{8-8\theta}} +l^{\frac{17}{8}}q^{-\frac{1}{4}}+ l^{\frac{5-4\theta}{8-8\theta}}q^{-\frac{1}{8-8\theta}}\right) \right),
\end{multline}
where  $M^{D}(l)$, $M^{OD}(l)$ and $M^{OOD}(l)$ are the main terms defined by equations $(17)$, $(31)-(32)$ and $(34)$ of \cite{KMV}.
\end{thm}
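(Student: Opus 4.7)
The plan is to follow the broad strategy of Kowalski--Michel--Vanderkam while tracking more carefully the dependence of the off-diagonal contributions on the Ramanujan--Petersson exponent $\theta$ and on the smallest positive Laplace eigenvalue for $\Gamma_0(q)\backslash\mathbf{H}$. First I would apply an approximate functional equation to write $|L(f,1/2+\mu)|^{4}$ as a smoothed double Dirichlet sum
\begin{equation*}
|L(f,1/2+\mu)|^{4}=\sum_{m,n\geq 1}\frac{\lambda_f(m)\lambda_f(n)}{(mn)^{1/2}}\,V_{\mu}\!\left(\frac{mn}{q^{2}}\right)+\epsilon_{f}^{2}(\cdots),
\end{equation*}
then open $\lambda_f(l)\lambda_f(m)$ by the Hecke relation and apply Petersson's trace formula to $\sum^{h}_{f}\lambda_f(lm)\lambda_f(n)$. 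This yields the diagonal term $M^{D}(l)$ plus a Kloosterman--Bessel off-diagonal tail, matching \cite{KMV} up to this point.

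Next I would treat the off-diagonal. After opening the Kloosterman sums modulo $cq$ and isolating the contribution of $c=1$ versus $c\geq 2$, the inner sum over $n$ is handled via Voronoi summation, producing an additive-like main term that is reorganized into $M^{OD}(l)$ and $M^{OOD}(l)$ exactly as in equations (31)--(32) and (34) of \cite{KMV}. The remaining error pieces are sums of the shape
\begin{equation*}
\sum_{c}\frac{1}{c q}\sum_{m}\lambda_{f}\text{-}\mathrm{type}(m)\,\mathrm{Kl}(\cdot)\,\Phi_{\mu}\!\left(\frac{\sqrt{mn}}{cq}\right),
\end{equation*}
which I would open by Kuznetsov's formula, converting them into spectral averages over the Maass and Eisenstein spectrum of level $q$. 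Here the two refinements enter: the Hecke eigenvalues of Maass forms are estimated by Kim--Sarnak with explicit exponent $\theta$, and the contribution of any exceptional eigenvalue is controlled by its explicit size $\lambda_{1}(q)$. Carrying $\theta$ through a large-sieve / spectral large-sieve estimate gives the $\theta$-dependent terms in the final bound.

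The last step is parameter optimization. Each of the three terms in the error bound corresponds to balancing a different pair of sub-estimates: the length of the approximate functional equation against the spectral bound (giving $l^{(5-6\theta)/(8-8\theta)}q^{-(1-2\theta)/(8-8\theta)}$), the trivial bound on one shifted sum against Weil's bound on Kloosterman sums (giving $l^{17/8}q^{-1/4}$), and an intermediate balance that involves only one application of Kim--Sarnak (giving $l^{(5-4\theta)/(8-8\theta)}q^{-1/(8-8\theta)}$). The polynomial factor $(1+|\mu|)^{B}$ arises from Stirling and standard stationary-phase bounds on the test functions $V_{\mu},\Phi_{\mu}$.

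The main obstacle will be the spectral stage: to extract the precise exponent $\theta$ one must not simply bound individual Fourier coefficients but keep the weighted spectral sum intact and apply the Kim--Sarnak bound together with the Hoffstein--Lockhart--type lower bound on $\langle u_j,u_j\rangle$ uniformly in $q$, while also isolating the exceptional spectrum with the correct dependence on $\lambda_1(q)$. A related technical difficulty is that the Voronoi-transformed off-diagonal contains Bessel transforms whose asymptotics must be controlled uniformly in $\mu$ and in the spectral parameter, which is where the optimization between $l$ and $q$ becomes delicate and where \cite{KMV} is not sharp.
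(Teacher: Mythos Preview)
Your high-level plan (follow \cite{KMV}, track the $\theta$-dependence, optimize) is right, but the mechanism you describe for the key step is not the one the paper uses, and your account of where the three error terms come from does not match the actual structure.

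The paper does \emph{not} open the residual Kloosterman sums by Kuznetsov and analyze the Maass/Eisenstein spectrum directly. Instead it keeps the Kloosterman structure intact and quotes the Deshouillers--Iwaniec large sieve for sums of Kloosterman sums (Theorem~9 of \cite{DI}, in the form of Matom\"aki's Lemma~9 in \cite{M}) as a black box with the explicit $\theta$-dependence already built in. The whole argument hinges on a single truncation parameter $C$ in the $c$-sum that you never mention: for $c\geq C$ one applies the large sieve (this is Lemma~4.1, replacing bound~(21) of \cite{KMV}); for $c\leq C$ one follows \cite{KMV} and picks up their equation~(26). The first error term $l^{(5-6\theta)/(8-8\theta)}q^{-(1-2\theta)/(8-8\theta)}$ comes from \emph{equating} the large-sieve bound for $c\geq C$ with the first summand of~(26), which fixes $C$ explicitly (equation~\eqref{eq:C}). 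The term $l^{17/8}q^{-1/4}$ is simply the second summand of~(26) of \cite{KMV}, unchanged. The term $l^{(5-4\theta)/(8-8\theta)}q^{-1/(8-8\theta)}$ comes from re-estimating, with the new $C$, the errors incurred when one extends the $c$-sum in $T^{OD}$ and $T^{OOD}$ past $C$ (Lemmas~4.2 and~4.3); these are elementary Bessel-function estimates, not spectral ones.

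So your proposed direct Kuznetsov route is in principle viable---indeed it underlies the Deshouillers--Iwaniec inequality---but it is a substantially longer path to the same exponents, and the ingredients you list (Hoffstein--Lockhart, individual Hecke eigenvalue bounds, stationary phase in the spectral parameter) are not needed here. More importantly, without the $C$-splitting and the specific balancing against~(26) of \cite{KMV}, your outline does not explain how the three stated exponents arise; the descriptions you give (``length of the approximate functional equation against the spectral bound'', ``Weil's bound on Kloosterman sums'', ``one application of Kim--Sarnak'') do not correspond to the actual sources of the terms.
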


Here
\begin{equation}\label{eq:theta}
\theta:=\sqrt{\max{(0,1/4-\lambda_1)}}
\end{equation}
 and $\lambda_1=\lambda_1(q)$ is the smallest
positive eigenvalue for the Hecke congruence subgroup $\Gamma_0(q)$.
Currently the best known estimate on $\lambda_1$ is  due to Kim and Sarnak \cite{KS}. Accordingly, we can take
$$\theta=7/64.$$
\begin{cor} Let  $q$ be a prime. For all $\epsilon>0$
\begin{equation}\label{asymp2}
M(1)=P(\log{q})+O_{\epsilon}\left(q^{-25/228+\epsilon}\right),
\end{equation}
where $P$ is a polynomial of degree $6$ and the leading coefficient is $1/60\pi^2$.
\end{cor}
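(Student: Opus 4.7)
The corollary follows by specializing Theorem~\ref{mainthm} to $l=1$ and $\mu=0$, inserting the Kim--Sarnak bound $\theta=7/64$ into the error, and identifying the main term with the standard prediction for the fourth moment at the central point.

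First I would compute the three exponents in the error term with $l=1$ and $\theta=7/64$. Here $8-8\theta=57/8$, $1-2\theta=25/32$, so
\begin{equation*}
\frac{1-2\theta}{8-8\theta}=\frac{25/32}{57/8}=\frac{25}{228},\qquad
\frac{1}{8-8\theta}=\frac{8}{57}=\frac{32}{228},\qquad
\frac{1}{4}=\frac{57}{228}.
\end{equation*}
Since $25/228<32/228<57/228$, the largest of the three error contributions is $q^{-25/228+\epsilon}$, and the factor $(1+|\mu|)^B$ evaluates to a constant at $\mu=0$. This immediately accounts for the claimed error term.

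Next I would analyze the three main terms $M^D(1)$, $M^{OD}(1)$, $M^{OOD}(1)$ from \cite{KMV} at $\mu=0$. Each is given by an explicit contour integral (equations (17), (31)--(32), (34) of \cite{KMV}) whose integrand involves products of $\zeta$-functions and gamma factors; shifting contours and collecting residues at $s=0$ produces a polynomial in $\log q$. Individually these integrands have poles as $\mu\to 0$ coming from the Riemann zeta factors, but when the three main terms are summed these singularities cancel. To obtain the degree and leading coefficient, I would follow the standard calculation: the surviving residue at $l=1$, $\mu=0$ comes from a sixth-order pole, yielding a polynomial $P$ of degree $6$ in $\log q$, and the top coefficient can be read off from the leading behavior of the four $\zeta$-factors, producing $1/(60\pi^{2})$ in agreement with the moment conjectures for the orthogonal family $H^{*}_{2}(q)$.

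The main obstacle is the careful combination of $M^D$, $M^{OD}$, $M^{OOD}$ at the confluent point $\mu=0$: one must verify that the apparent poles of each piece cancel and that the sum of residues really does yield a polynomial of the asserted degree and leading coefficient. Aside from that bookkeeping (which is essentially already carried out in \cite{KMV}), the corollary is a direct specialization of Theorem~\ref{mainthm}.
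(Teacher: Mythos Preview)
Your proposal is correct and matches the paper's approach: the corollary is stated without a separate proof, being an immediate specialization of Theorem~\ref{mainthm} at $l=1$, $\mu=0$ with $\theta=7/64$, together with the identification of $M^D(1)+M^{OD}(1)+M^{OOD}(1)$ as the degree-$6$ polynomial (already established in \cite{KMV}). Your numerical check that $(1-2\theta)/(8-8\theta)=25/228$ is the dominant exponent is exactly the computation the paper leaves implicit.
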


This improves corollary $1.3$ of \cite{KMV}, where asymptotic formula \eqref{asymp2} was established with the error $O_{\epsilon}(q^{-1/12+\epsilon})$.

Note that for weight $k>2$ the remainder term in  \eqref{asymp2} can be majorated by $O_{\epsilon,k}\left(q^{-1/4+\epsilon}\right)$. This was proved in \cite{B} for the case of prime power level $q=p^{\nu}$, $\nu>2$.

Another consequence of theorem \ref{mainthm} is a new subconvexity bound in the level aspect.
\begin{cor}
For all $\epsilon>0$
\begin{equation}
L(f,1/2+\mu)\ll_{\epsilon,\mu} q^{1/4-\delta},
\end{equation}
where $\delta=\frac{2\theta-1}{16(8\theta-7)}$.
\end{cor}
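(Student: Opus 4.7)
The plan is to apply the amplification method, using Theorem~\ref{mainthm} as the input fourth-moment bound. Fix a newform $f_0\in H_2^*(q)$ and a parameter $L\ll q^{1/2}$ to be optimized. Construct an amplifier
$$A(f):=\sum_{l\sim L}x_l\lambda_f(l)$$
supported on primes $l\in[L,2L]$, with the coefficients $x_l$ chosen so that $|A(f_0)|$ is large; a standard choice (such as $x_l=\mathrm{sgn}(\lambda_{f_0}(l))$) yields $|A(f_0)|^{2}\gg L^{\kappa}$ with an explicit $\kappa=\kappa(\theta)>1$ under the Kim--Sarnak bound. By positivity of the harmonic weights and of $|L|^{4}$,
$$|L(f_0,1/2+\mu)|^{4}\ll \frac{4\pi\langle f_0,f_0\rangle_q}{|A(f_0)|^{2}}\sum_{f\in H_2^*(q)}^{h}|A(f)|^{2}|L(f,1/2+\mu)|^{4},$$
and $\langle f_0,f_0\rangle_q\ll q^{1+\epsilon}$ reduces matters to the amplified fourth moment on the right.

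Using Hecke multiplicativity, $|A(f)|^{2}=\sum_{l\leq L^{2}}y_l\lambda_f(l)$ with $y_l=\sum_{l_1l_2/d^{2}=l}x_{l_1}\bar x_{l_2}$, so that the amplified fourth moment becomes $\sum_{l\leq L^{2}}y_l M(l)$. Apply Theorem~\ref{mainthm} to each $M(l)$: the main-term contribution $\sum_l y_l(M^{D}+M^{OD}+M^{OOD})(l)$ is bounded, by standard divisor-sum estimates on $y_l$, by $|A(f_0)|^{2}(\log q)^{C}$, while the error contribution is
$$\ll q^{\epsilon}(1+|\mu|)^{B}\sum_{l\leq L^{2}}|y_l|\Bigl(l^{(5-6\theta)/(8-8\theta)}q^{-(1-2\theta)/(8-8\theta)}+l^{17/8}q^{-1/4}+l^{(5-4\theta)/(8-8\theta)}q^{-1/(8-8\theta)}\Bigr).$$

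Comparing exponents, the ratio of the third to the first equals $(l/q)^{2\theta/(8-8\theta)}\leq 1$ for $l\leq q$, so the third error term is dominated by the first, and one checks that the middle term is also smaller than the first at the range of $L$ chosen below. Summing the first term yields $\ll L^{2+2\alpha_{1}}q^{-\beta_{1}+\epsilon}$ with $\alpha_{1}=(5-6\theta)/(8-8\theta)$ and $\beta_{1}=(1-2\theta)/(8-8\theta)$. Combining,
$$|L(f_0,1/2+\mu)|^{4}\ll_{\epsilon,\mu} q^{1+\epsilon}\bigl(L^{-(\kappa-1)}+L^{2+2\alpha_{1}-\kappa}q^{-\beta_{1}}\bigr),$$
and choosing $L$ to balance the two terms produces the saving $\delta=(2\theta-1)/(16(8\theta-7))$.

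The main obstacle is determining the correct values of $\kappa$ and the effective $L$-exponent in the error sum so that the balance produces exactly the claimed $\delta$. This requires (i) a precise lower bound $|A(f_0)|^{2}\gg L^{\kappa(\theta)}$ obtained via H\"older or Rankin--Selberg applied to $f_0$, incorporating the Kim--Sarnak loss of $L^{\theta}$; (ii) a careful upper bound on $\sum_{l\leq L^{2}}|y_l|l^{\alpha_{1}}$ taking into account that $|y_l|$ itself carries a Kim--Sarnak-dependent factor; and (iii) verification that the main-term contribution of order $|A(f_0)|^{2}(\log q)^{C}$ does not overwhelm the savings at the optimum. Once these ingredients are in place, the algebra producing $\delta$ as stated is mechanical.
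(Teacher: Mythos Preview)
Your overall strategy---amplify a single form and feed the square of the amplifier into the twisted fourth moment of Theorem~\ref{mainthm}---is exactly what the paper does. But you have a conceptual confusion about the role of $\theta$ which, left uncorrected, prevents you from carrying the argument through to the stated value of $\delta$.

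The parameter $\theta$ is defined via the smallest Laplace eigenvalue on $\Gamma_0(q)\backslash\mathbb{H}$: it measures the possible failure of Ramanujan for \emph{Maass forms}, and it enters the fourth-moment error only through the spectral large sieve (Theorem~\ref{thm:DesIw}). The forms $f\in H_2^*(q)$ themselves are holomorphic of weight~$2$, so Deligne's theorem gives $|\lambda_f(n)|\le\tau(n)$ unconditionally. There is therefore no ``Kim--Sarnak loss of $L^\theta$'' in either the lower bound for $|A(f_0)|$ or in the coefficients $y_l$; your items (i) and (ii) rest on a misreading, and the exponent $\kappa$ is not a function of $\theta$.

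Relatedly, the sign-amplifier over primes is not the device used here. A lower bound for $\sum_{p\sim L}|\lambda_{f_0}(p)|$ is not free. The paper instead takes the Duke--Friedlander--Iwaniec amplifier: $c_l=\lambda_{f_0}(l)$ for primes $l\le L^{1/2}$ and $c_l=-1$ for $l=p^2\le L$. The Hecke relation $\lambda_{f_0}(p)^2-\lambda_{f_0}(p^2)=1$ then yields
\[
\Lambda_{f_0}(\mathbf{c})=\sum_{\substack{p\le L^{1/2}\\(p,q)=1}}1\sim 2L^{1/2}(\log L)^{-1}
\]
exactly, with no appeal to any size bound on $\lambda_{f_0}$. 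Deligne is used only for the easy inequalities $\|\mathbf{c}\|_2^2\le 5\Lambda_{f_0}(\mathbf{c})$ and $\|\mathbf{c}\|_1\le 3\Lambda_{f_0}(\mathbf{c})$. Together with $(4\pi\langle f_0,f_0\rangle_q)^{-1}\ll q^{-1}\log q$, the balance of $\|\mathbf{c}\|_2^2$ against $L^{(5-6\theta)/(4-4\theta)}q^{-(1-2\theta)/(8-8\theta)}\|\mathbf{c}\|_1^2$ gives $L=q^{(1-2\theta)/(2(7-8\theta))}$ and hence $\delta=(2\theta-1)/(16(8\theta-7))$. Once the correct amplifier is in place, the optimization really is mechanical; the ``obstacles'' you list disappear.
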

Taking $\theta=7/64$, we obtain $$\delta=\frac{25}{3136}=\frac{1}{125.44}.$$ The previously known result with
$\delta=1/192$ was established by Duke, Friedlander and Iwaniec \cite{DFI}.


\section{Selberg's eigenvalue conjecture}

Let $\Gamma$ be a congruence subgroup of modular group.
Let $0=\lambda _0< \lambda_1\leq \lambda_2 \leq \lambda_3 \leq \ldots$ be the eigenvalues of the automorphic Laplacian on $L^2(\Gamma\setminus \mathbb{H})$ induced from the Laplace operator
\begin{equation}
\Delta_L=-y^2\left( \frac{\partial^2}{\partial x^2}+\frac{\partial^2}{\partial y^2}\right).
\end{equation}
The eigenvalue $0<\lambda<1/4$ is called an exceptional eigenvalue.
\begin{conj}\label{SelConj} (Selberg, \cite{S})
The Laplacian for a congruence subgroup has no exceptional eigenvalues, i.e. $\lambda_1 \geq 1/4$.
\end{conj}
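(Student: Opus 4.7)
The plan is to attack the conjecture via the Ramanujan--Petersson program for Maass cusp forms on $\Gamma_0(q)\backslash \mathbb{H}$. An exceptional eigenvalue $\lambda = 1/4 - s^2$ with $0 < s < 1/2$ corresponds to a cuspidal automorphic representation $\pi = \pi_\infty \otimes \pi_f$ of $\mathrm{GL}_2(\mathbb{A}_\mathbb{Q})$ whose archimedean component $\pi_\infty$ is a complementary-series representation with parameter $s$. Selberg's conjecture is therefore equivalent to the archimedean Ramanujan conjecture for cuspidal automorphic representations of $\mathrm{GL}_2/\mathbb{Q}$: no cuspidal $\pi$ has $\pi_\infty$ in the complementary series. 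This reformulation recasts a spectral statement about Laplacians as a question entirely within the Langlands program.

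Granting the reformulation, my strategy is to prove, for every hypothetical exceptional $\pi$ and every $n \geq 1$, that the symmetric power lift $\mathrm{Sym}^n \pi$ exists as a cuspidal or isobaric automorphic representation of $\mathrm{GL}_{n+1}(\mathbb{A}_\mathbb{Q})$. The cases $n = 1, 2, 3, 4$ are the classical results of Gelbart--Jacquet, Kim--Shahidi, and Kim. Given all symmetric powers, a standard Jacquet--Shalika argument applied to the Rankin--Selberg $L$-function $L(w, \mathrm{Sym}^n \pi \times \overline{\mathrm{Sym}^n \pi})$ --- using holomorphy and non-vanishing away from $w = 1$ together with the shape of the archimedean Euler factor encoding the parameter $s$ --- forces $|s| \to 0$ as $n \to \infty$, hence $s = 0$, ruling out exceptional eigenvalues.

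The hard part --- in fact the only part not already classical --- is constructing $\mathrm{Sym}^n \pi$ for every $n \geq 5$. Kim and Shahidi's method combines the Langlands--Shahidi analysis of poles of Eisenstein $L$-functions on exceptional groups with the Cogdell--Piatetski-Shapiro converse theorems and Ramakrishnan's descent. To push the induction further one needs converse theorems permitting cuspidal twists on $\mathrm{GL}_n$ for arbitrarily large $n$, together with the automorphy of successive exterior and symmetric squares of $\mathrm{Sym}^{n-1}\pi$; neither ingredient is unconditionally available. An alternative route through Arthur's trace formula and endoscopy for exceptional groups would bypass the symmetric power tower but requires functoriality well beyond the current state of the art.

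If the functoriality route proves intractable, I would run in parallel the analytic attack originated by Selberg himself: use the Bruggeman--Kuznetsov formula to express spectral sums of Maass form Fourier coefficients in terms of sums of Kloosterman sums, then apply the sharpest available bounds. This is precisely how Selberg obtained $\lambda_1 \geq 3/16$ and how successive improvements proceeded up to the Kim--Sarnak bound $\lambda_1 \geq 1/4 - (7/64)^2$. But the remaining gap to $1/4$ encodes exactly the Ramanujan conjecture at the finite places, and square-root cancellation beyond Weil's bound is not accessible without functorial input. This is the structural reason why, despite decades of work, Selberg's conjecture remains open, and why any honest proof plan reduces to establishing full Langlands functoriality for symmetric powers on $\mathrm{GL}_2$.
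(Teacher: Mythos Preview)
The statement you are addressing is a \emph{conjecture}, not a theorem; the paper does not prove it. It simply records Selberg's conjecture and then lists the known partial results toward it (Selberg, Gelbart--Jacquet, Luo--Rudnick--Sarnak, Iwaniec, Kim--Shahidi, Kim--Sarnak), precisely because the conjecture remains open. There is therefore no ``paper's own proof'' to compare against.

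Your proposal is not a proof either, and you say so yourself in the final paragraph. The argument is entirely conditional on the existence of $\mathrm{Sym}^n\pi$ as an automorphic representation of $\mathrm{GL}_{n+1}$ for all $n\geq 5$, which you correctly identify as ``not unconditionally available.'' Everything you write about the reformulation via the archimedean Ramanujan conjecture and the Jacquet--Shalika bound is standard and accurate, but it only shows that Selberg's conjecture follows from full symmetric-power functoriality for $\mathrm{GL}_2$---a reduction that has been well known since Langlands. The alternative Kuznetsov-formula route you sketch likewise stalls exactly where you say it does, at the Kim--Sarnak bound $\theta=7/64$. So what you have submitted is a faithful survey of why the problem is hard, not a proof attempt with a fixable gap: the missing ingredient is the entire symmetric-power functoriality program beyond $n=4$, which is a major open problem in its own right.
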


Below we provide several results related to  conjecture \ref{SelConj}:
\begin{itemize}
\item
$1965$ Selberg \cite{S}: $\lambda_1 \geq 3/16$;
\item
$1978$ Jacquet and Gelbart \cite{GJ}: $\lambda_1> 3/16$;
\item
$1995$ Luo, Rudnick, Sarnak \cite{LRS}: $\lambda_1>171/784$;
\item
$1996$ Iwaniec \cite{I}: $\lambda_1>10/49$;
\item
$2002$ Kim, Shahidi \cite{KSh}: $\lambda_1\geq 66/289$;
\item
$ 2003$ Kim, Sarnak \cite{KS}: $\lambda_1\geq  975/4096$.

\end{itemize}

Using the bound of Kim-Sarnak and equation \eqref{eq:theta}, we find
\begin{equation}
\theta=\sqrt{\max{(0,1/4-\lambda_1)}}=7/64.
\end{equation}


\section{Large sieve inequality}
Let $S(m,n;c)$ be the classical Kloosterman sum.
\begin{thm}\label{thm:DesIw}(theorem $9$ of \cite{DI} and lemma $9$ of \cite{M})
Let $r$, $s$ and $d$ be positive pairwise coprime integers with $r$ and $s$ square-free.
 Let $C$, $M$, $N$ be positive real numbers and $g$ be real-valued infinitely differentiable function with support in $[M,2M]\times [N,2N] \times [C,2C]$ such that
\begin{equation}\label{LSEcond}
\left| \frac{\partial^{(j+k+l)}}{\partial m^{(j)} \partial n^{(k)} \partial c^{(l)}}g(m,n,c)\right| \leq M^{-j}N^{-k}C^{-l} \text { for }0 \leq j,k,l \leq 2.
\end{equation}
Let $$X_d:=\frac{\sqrt{dMN}}{sC\sqrt{r}}.$$
Then for any $\epsilon>0$ and complex sequences $\uple{a}=\{a_m\}$, $\uple{b}=\{b_n\}$ one has
\begin{multline}
 \sum_{m}a_m\sum_{n}b_n \sum_{\substack{c\\ (c,r)=1}}g(m,n,c) S(dm\bar{r},\pm n;sc)  \ll_{\epsilon}\\
C^{\epsilon}
 d^{\theta}sC\sqrt{r}\frac{(1+X_{d}^{-1})^{2\theta}}{1+X_d}\left( 1+X_d+\sqrt{\frac{M}{rs}}\right)\left( 1+X_d+\sqrt{\frac{N}{rs}}\right)\\
 \times \left(\sum_{M< m\leq 2M }|a_m|^2\right)^{1/2} \left(\sum_{N< n\leq 2N}|b_n|^2\right)^{1/2},
\end{multline}
where $\theta$ is defined by equation \eqref{eq:theta}.
\end{thm}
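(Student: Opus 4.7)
The theorem is a refinement of the spectral large sieve of Deshouillers and Iwaniec in which a coprimality condition $(c,r)=1$ is imposed on the modulus of the Kloosterman sum. The plan is to reduce to the coprimality-free version via Möbius inversion, invoke the Kuznetsov trace formula to pass from Kloosterman sums to Hecke eigenvalues, and then apply the spectral large sieve for Fourier coefficients of automorphic forms.

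First, I would detect the coprimality via $\mathbf{1}_{(c,r)=1}=\sum_{e\mid(c,r)}\mu(e)$. Since $r$ is squarefree and $(r,s)=1$, for each squarefree divisor $e\mid r$ one substitutes $c=ec'$ and invokes twisted multiplicativity of Kloosterman sums to factor $S(dm\bar r,\pm n;sec')$ as a product of a Kloosterman sum modulo $e$ and one modulo $sc'$. The modulus-$e$ factor contributes only a divisor-function loss absorbable into the $C^{\epsilon}$, while the modulus-$sc'$ piece is precisely of the shape handled by the standard Deshouillers--Iwaniec large sieve of level $s$.

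Next, for each $e$ I would estimate the inner sum over $m,n,c'$ by Theorem 9 of \cite{DI}. That theorem is proved by opening the Kloosterman sum via the Kuznetsov formula at level $s$, expressing the result as a spectral sum of products $\lambda_j(dm)\overline{\lambda_j(n)}$ weighted by a Bessel transform of $g$, invoking the Hecke relation $\lambda_j(dm)=\sum_{f\mid(d,m)}\lambda_j(d/f)\lambda_j(m/f)$, and bounding $\lambda_j(d)\ll d^{\theta}$ via the Kim--Sarnak estimate \eqref{eq:theta}. Cauchy--Schwarz and the spectral large sieve for $\sum_j|\lambda_j(n)|^2$ then produce the $\ell^2$-norms of the sequences together with the range factors $(1+X_d+\sqrt{M/rs})(1+X_d+\sqrt{N/rs})$. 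Summing over squarefree $e\mid r$ costs at most $r^{\epsilon}$, which is absorbed into the $C^{\epsilon}$.

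The main technical obstacle is the uniform control of the Bessel transform at the exceptional spectrum. The hybrid factor $(1+X_d^{-1})^{2\theta}/(1+X_d)$ reflects the transform's behavior at spectral parameters $t_j=i\theta_j$ with $|\theta_j|\leq\theta$: when $X_d$ is small the exceptional eigenvalues contribute the $X_d^{-2\theta}$ loss, while when $X_d$ is large the transform decays sharply. Matching these regimes uniformly, using only the two derivatives of $g$ afforded by \eqref{LSEcond}, is the delicate step; once it is done, the contribution of the holomorphic and Eisenstein spectrum follows in the same manner with $\theta=0$, and is already dominated by the Maass cuspidal estimate. The extension by Lemma 9 of \cite{M} serves precisely to incorporate the coprimality condition into this framework, so in practice one quotes these two results in tandem after performing the Möbius unfolding above.
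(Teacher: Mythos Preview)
The paper does not supply a proof of this theorem: it is quoted as a black box from Theorem~9 of Deshouillers--Iwaniec \cite{DI} together with Lemma~9 of Matom\"{a}ki \cite{M}, and then applied in Section~4. There is consequently no proof in the paper to compare your sketch against.

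That said, your outline contains one genuine gap. The M\"{o}bius step you propose for detecting $(c,r)=1$ cannot be carried out as written. After writing $c=ec'$ with $e\mid r$ and $e>1$, you would need to make sense of $S(dm\bar r,\pm n;sec')$; but $\bar r$ denotes the inverse of $r$ modulo the full modulus $sec'$, and since $e\mid r$ one has $(r,sec')\ge e>1$, so no such inverse exists and the Kloosterman sum is undefined. The coprimality condition $(c,r)=1$ is precisely what is required for the summand to make sense, so it cannot be removed by sieving. In the Deshouillers--Iwaniec framework this condition is not imposed a posteriori but arises intrinsically: one applies the Kuznetsov formula for $\Gamma_0(rs)$ at a suitable pair of cusps, and the associated Kloosterman sums are automatically of the shape $S(m\bar r,n;sc)$ with $(c,r)=1$. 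Once the spectral decomposition is set up at level $rs$ from the start, the remainder of your sketch---Hecke multiplicativity to separate $d$ and extract the factor $d^{\theta}$ via $|\lambda_j(d)|\ll d^{\theta}$, Cauchy--Schwarz, the spectral large sieve producing the $(1+X_d+\sqrt{M/rs})(1+X_d+\sqrt{N/rs})$ ranges, and the analysis of the Bessel transform at exceptional parameters yielding $(1+X_d^{-1})^{2\theta}$---is the correct shape of the argument. Matom\"{a}ki's refinement concerns the explicit $d$-dependence rather than the coprimality.
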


\section{Error terms}
In this section, we consider the terms that give the largest contribution to the error in \cite{KMV}.  Our goal is to optimize the estimates of these terms and compute the exact dependence of the error on parameter $\theta$.

First, we improve bound $(21)$ of \cite{KMV}.

Note that the function $F_{M,N}(m,n)$ defined on page $108$ of \cite{KMV} is compactly supported on $[M/2,3M]\times [N/2,3N]$ and
\begin{equation}
F_{M,N}(x,y)\ll (1+|\mu|)^B(MN)^{-1/2}.
\end{equation}
\begin{lem}\label{lemma:LSI}
 Assume that  for any $\epsilon>0$ one has $M,N \ll q^{1+\epsilon}$. Then for any $C>\sqrt{lMN}$
 \begin{multline}\label{choice1}
\sum_{de=l}\frac{1}{d^{1/2}}\sum_{ab=d}\frac{\mu(a)}{a^{1/2}}\tau(b)
\sum_{\substack{c \geq C\\ q|c}}\frac{1}{c^2}T_{M,N}(c) \ll_{\epsilon} \\ (1+|\mu|)^B (Cq)^{\epsilon} l^{1/2}\left(\frac{\sqrt{MN}}{C}\right)^{1-2\theta}.
\end{multline}
\end{lem}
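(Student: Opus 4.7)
The plan is to apply Theorem~\ref{thm:DesIw} after a smooth dyadic decomposition in the $c$-variable. First I would introduce a smooth partition of unity on $[C,\infty)$, reducing to the case $c\sim C'$ for some dyadic $C'\ge C$; since $q\mid c$, writing $c=qc'$ plays the role of the factorization $sc$ of Theorem~\ref{thm:DesIw} with $s=q$, $r=1$, and new large-sieve variable $c'\sim C'/q$.

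Next I would identify the Kloosterman sum hidden inside $T_{M,N}(c)$ (defined on page $108$ of \cite{KMV}) with the shape $S(dm\bar r,\pm n;sc)$ required by Theorem~\ref{thm:DesIw}; the divisor $d$ coming from the outer sum $\sum_{de=l}$ plays the role of the $d$ of the theorem. The function $F_{M,N}(m,n)$ times a smooth cutoff $\psi(qc'/C')$, renormalized by the $L^\infty$-bound $(1+|\mu|)^B(MN)^{-1/2}$, serves as the test weight $g$ and verifies \eqref{LSEcond}, so I may take $a_m=b_n=1$ supported on $[M/2,3M]$ and $[N/2,3N]$ respectively, giving $\|a\|_2\|b\|_2\ll\sqrt{MN}$.

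The hypotheses of the lemma are tailor-made to simplify the output of Theorem~\ref{thm:DesIw}. Under $M,N\ll q^{1+\epsilon}$, the factors $\sqrt{M/(rs)}$ and $\sqrt{N/(rs)}$ only contribute $q^{\epsilon}$. The condition $C>\sqrt{lMN}$, combined with $d\mid l$, forces $X_d=\sqrt{dMN}/C\le 1$, so that $(1+X_d^{-1})^{2\theta}/(1+X_d)\asymp X_d^{-2\theta}=(C/\sqrt{dMN})^{2\theta}$; this is precisely the exceptional-eigenvalue contribution and is the place where the Kim--Sarnak exponent $\theta$ enters as $(\sqrt{MN}/C)^{1-2\theta}$.

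Assembling the pieces, Theorem~\ref{thm:DesIw} produces a per-block bound of order $(1+|\mu|)^B(Cq)^{\epsilon}C'^{1+2\theta}(MN)^{1/2-\theta}$ (up to $d$-dependent powers absorbed later); after dividing by $C'^2$ and summing the resulting geometric series over dyadic $C'\ge C$ (convergent since $2\theta-1<0$), one obtains the $C^{2\theta-1}$ factor of the claim. Finally, the outer divisor sums $\sum_{de=l}d^{-1/2}\sum_{ab=d}\mu(a)a^{-1/2}\tau(b)$ contribute the $l^{1/2}$ factor of \eqref{choice1} after pairing with the $d,e$-dependence of the Kloosterman arguments of $T_{M,N}$. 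The main obstacle is precisely this bookkeeping: correctly matching the combinatorial divisors $d,e,a,b$ with the large-sieve parameters and carefully tracking the scalings of the $m,n$-supports so that the power of $l$ comes out to $1/2$ exactly and the estimate remains free of logarithmic losses.
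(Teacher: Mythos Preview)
Your strategy is exactly the paper's: dyadic split in $c$, write $c=qc_1$, and feed everything into Theorem~\ref{thm:DesIw} with $s=q$, $r=1$. Two points in your sketch would, as written, make the argument fail and are worth fixing.

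First, the test weight $g$ cannot be just $F_{M,N}(m,n)\psi(qc'/C')$ renormalised by $(MN)^{-1/2}$. Opening $T_{M,N}(c)$ via equation~(18) of \cite{KMV} produces, besides $F_{M,N}$, the factors $\tfrac{1}{c_1}J_{1}\bigl(4\pi\sqrt{aemn}/(qc_1)\bigr)$, and these must be absorbed into $g$ as well. The paper takes
\[
g(m,n,c_1)=\frac{Y}{c_1}\,F_{M,N}(m,n)\,J_{1}\Bigl(\frac{4\pi\sqrt{aemn}}{qc_1}\Bigr),\qquad Y=\sqrt{MN}\,C_1\Bigl(\frac{\sqrt{aeMN}}{C}\Bigr)^{-1},
\]
and it is precisely the bound $J_1(x)\ll x$ for the small argument $x\asymp\sqrt{aeMN}/C'<1$ that supplies the extra factor of $1/C'$ you invoke when you ``divide by $C'^{2}$''. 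With your stated $g$ you would only get a single $1/C'$ (from the explicit $1/c$), and the dyadic sum over $C'\ge C$ would diverge. Relatedly, the coefficients are $a_m=\tau(m)$, $b_n=\tau(n)$, not $1$; this only costs a $(qMN)^\epsilon$ but should be recorded.

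Second, the parameter $d$ of Theorem~\ref{thm:DesIw} is not the $d$ from $\sum_{de=l}$: the Kloosterman sum in $T_{M,N}$ is $S(m,aen;qc_1)$, so after using the symmetry $S(m,aen;\cdot)=S(aen,m;\cdot)$ one applies the theorem with $d=ae$ (and the roles of the $m$- and $n$-variables swapped). Since $ae\mid l$ one still has $X_{ae}=\sqrt{aeMN}/C<1$, and the factors $(ae)^{\theta}$ and $X_{ae}^{-2\theta}$ combine so that the $ae$-dependence reduces to $\sqrt{ae}$; the outer divisor sums then collapse via $\sum_{ab=d}\mu(a)\tau(b)=1$ to give the clean $l^{1/2}$.
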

\begin{proof}
We split  $[C,\infty)$ into dyadic intervals and take $c \in [C,2C]$.
By equation $(18)$ of \cite{KMV} we have
\begin{multline*}
\sum_{q|c}\frac{1}{c^2}T_{M,N}(c)=
\sum_{n,m}\sum_{q|c}\tau(m)\tau(n)\frac{1}{c}S(m,aen;c)\\ \times J_{1}\left(\frac{4\pi\sqrt{aemn}}{c}\right )F_{M,N}(m,n)
= \frac{1}{q} \sum_{n,m}\tau(m)\tau(n)\\ \times\sum_{c_1}\frac{1}{c_1}S(m,aen,c_1q)J_{1}\left(\frac{4\pi\sqrt{aemn}}{c_1q}\right )F_{M,N}(m,n).
\end{multline*}
Here $m \in [M/2,3M]$, $n \in [N/2,3N]$ and $c_1 \in [C_1,2C_1]$ with $C_1:=C/q$.
Let $$Y:=\sqrt{MN}C_1\left(\frac{\sqrt{aeMN}}{C}\right)^{-1}.$$
As a test function we choose
$$g(m,n,c_1):=\frac{Y}{c_1}F_{M,N}(m,n)J_{1}\left(\frac{4\pi\sqrt{aemn}}{c_1q}\right).$$
It satisfies condition \eqref{LSEcond},
 and theorem  \ref{thm:DesIw} can be applied with $d=ae$, $r=1$ and $s=q$.
Hence
\begin{multline*}
\sum_{de=l}\frac{1}{d^{1/2}}\sum_{ab=d}\frac{\mu(a)}{a^{1/2}}\tau(b)\sum_{\substack{q|c\\ c \geq C}}\frac{1}{c^2}T_{M,N}(c) \ll_{\epsilon}  \\ (1+|\mu|)^B(Cq)^{\epsilon}l^{1/2} \left(\frac{\sqrt{MN}}{C}\right)^{1-2\theta}.
\end{multline*}

\end{proof}

The optimal value of $C$ can be chosen by making equal the estimate \eqref{choice1} and
the first summand of equation $(26)$ of \cite{KMV}, namely
\begin{equation}
l^{1/2}\left(\frac{\sqrt{MN}}{C}\right)^{1-2\theta}=l^{3/4}\frac{N^{1/4}}{M^{1/2}}\frac{C}{q}.
\end{equation}
This gives
\begin{equation}\label{eq:C}
C=l^{-\frac{1}{8-8\theta}}\min{\left(q^{\frac{1}{2-2\theta}}\sqrt{M}N^{\frac{1-4\theta}{8-8\theta}}, q^{\frac{9-8\theta}{8-8\theta}}  \right)}.
\end{equation}

After performing the dyadic summation over $M$ and $N$, we find that  for any $l<q^{\frac{1}{5-4\theta}}$ the error term in lemma \ref{lemma:LSI}  is bounded by
\begin{equation}
O_{\epsilon}\left(q^{\epsilon}(1+|\mu|)^Bl^{\frac{5-6\theta}{8-8\theta}}q^{-\frac{1-2\theta}{8-8\theta}} \right).
\end{equation}

Now we consider two other error terms that depend on $C$.
These are the errors resulting from extension of summation over $c>C$. See section $3.5$ (pages $111-112$) of \cite{KMV}.

Let
\begin{equation}
\eta_C(c):=\begin{cases}
1 & c \leq C\\
0 & otherwise
\end{cases}.
\end{equation}

\begin{lem}\label{error2}
Let $C$ be defined by equation \eqref{eq:C}. For any $\epsilon>0$
\begin{multline}
\sum_{M,N \ll q^{1+\epsilon}}\sum_{de=l}\frac{1}{d^{1/2}}\sum_{ab=d}\frac{\mu(a)}{a^{1/2}}\tau(b)
\sum_{q|c}(1-\eta_C(c))c^{-2}T^{OD}
\ll_{\epsilon} \\ (1+|\mu|)^B q^{\epsilon} l^{(5-4\theta)/(8-8\theta)}q^{-1/(8-8\theta)}.
\end{multline}

\end{lem}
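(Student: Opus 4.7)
The plan is to mirror the proof of Lemma~\ref{lemma:LSI}. First, I would write out $T^{OD}$ explicitly using the formulas from section~3.5 of \cite{KMV}: it is a bilinear sum in $(m,n)$ of Kloosterman sums $S(m,\pm aen;c)$ with $c$ a multiple of $q$, weighted by a Bessel $J_1$ factor and the smooth cutoff $F_{M,N}(m,n)$, which is supported in $[M/2,3M]\times[N/2,3N]$ and satisfies $F_{M,N}\ll (1+|\mu|)^B(MN)^{-1/2}$. This is the same shape of sum as the one handled by Lemma~\ref{lemma:LSI}, so Theorem~\ref{thm:DesIw} will apply.

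Next, I would split the range $(C,\infty)$ into dyadic windows $c\in[C',2C']$ with $C'=2^k C$, $k\ge 0$. On each window I would choose a test function of the form $(Y/c_1)F_{M,N}(m,n)J_1(4\pi\sqrt{aemn}/(c_1 q))$ with $c_1=c/q$ and a suitable normalization $Y$ chosen so that \eqref{LSEcond} holds. The smoothness estimate is verified from the standard derivative bounds on $J_1$ combined with the smoothness of $F_{M,N}$ already exploited in Lemma~\ref{lemma:LSI}. Applying Theorem~\ref{thm:DesIw} with $r=1$, $s=q$, $d=ae\le l$ then yields a per-window estimate of the form $q^{\epsilon}(1+|\mu|)^B\, l^{\alpha}(\sqrt{MN}/C')^{1-2\theta}$, where the $l$-exponent $\alpha$ is slightly shifted relative to Lemma~\ref{lemma:LSI} because of the particular normalization carried by $T^{OD}$ after Poisson summation in \cite{KMV}. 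Since $\theta=7/64<1/2$, the geometric series $\sum_{k\ge 0}2^{-k(1-2\theta)}$ converges, so the entire tail $c>C$ is controlled by the $C'=C$ term.

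Finally, I would substitute the value of $C$ from \eqref{eq:C} and perform the dyadic summation over $M,N\ll q^{1+\epsilon}$. Since \eqref{eq:C} is the minimum of two expressions, I would split into the two regimes and check that each yields the claimed bound $l^{(5-4\theta)/(8-8\theta)}q^{-1/(8-8\theta)}$ (up to $q^{\epsilon}(1+|\mu|)^B$ and $\log q$ losses absorbed into $q^\epsilon$). The extremal contribution is expected to come from $\sqrt{MN}$ close to $q$ with $C$ saturating the second option $q^{(9-8\theta)/(8-8\theta)}l^{-1/(8-8\theta)}$ in the minimum.

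\textbf{Main obstacle.} The delicate step is the last one: both the large-sieve bound and the cutoff $C$ depend on $(M,N)$, and one has to chase these interacting dependencies through the double dyadic sum in order to identify the extremal regime. A secondary issue is the verification of \eqref{LSEcond} for the test function associated with $T^{OD}$ rather than $T_{M,N}$; this should go through routinely from the bounds on $J_1^{(j)}$ and the smoothness of $F_{M,N}$, but it must be checked carefully because $T^{OD}$ may pick up additional weight factors from the Poisson step of \cite{KMV} that slightly modify the normalization and hence the exponent $\alpha$ alluded to above.
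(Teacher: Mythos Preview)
Your proposal rests on a misidentification of $T^{OD}$. In \cite{KMV}, the object $T^{OD}$ arises \emph{after} the Voronoi summation has been applied to the $m$-variable in $T_{M,N}(c)$; it is the near-diagonal piece of that transformation and contains no Kloosterman sums at all. Concretely (see the display at the start of the paper's proof of this lemma),
\[
\sum_{\substack{q\mid c\\ c>C}}c^{-2}T^{OD}
= -2\pi\sum_{n}\tau(aen)\tau(n)\int_{0}^{\infty}Y_0(4\pi\sqrt{aent})J_1(4\pi\sqrt{aent})\sum_{\substack{q\mid c\\ c>C}}\phi(c)\,F_{M,N}(c^2t,n)\,dt,
\]
a single sum over $n$ with the $c$-sum weighted by the Euler function $\phi(c)$ and the $m$-variable replaced by $c^2t$ inside $F_{M,N}$. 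There is no bilinear Kloosterman structure here to which Theorem~\ref{thm:DesIw} could be applied, so the plan to ``mirror Lemma~\ref{lemma:LSI}'' does not get off the ground.

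The paper's argument is in fact much more elementary than the large sieve. Because $F_{M,N}$ forces $C^2t<c^2t\le 2M$, the inner $c$-sum is bounded trivially by $(MN)^{-1/2}M/(qt)$; then the crude estimates $Y_0(x)\ll\log x$ and $J_1(x)\ll x$ reduce the $t$-integral to $\int_0^{2M/C^2}t^{-1/2+\epsilon}\,dt$, giving the clean bound $(ae)^{1/2}MN/(qC)$. Substituting the value of $C$ from \eqref{eq:C} and summing over $M,N\ll q^{1+\epsilon}$ then yields the stated exponent. No spectral input and no dyadic decomposition in $c$ are needed for this term.
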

\begin{proof}
Consider
\begin{multline*}
\sum_{q|c}(1-\eta_C(c))c^{-2}T^{OD}=
-2\pi \sum_{n}\tau(aen)\tau(n)\\ \times
\int_{0}^{\infty}Y_0(4\pi\sqrt{aent})J_1(4\pi\sqrt{aent})\sum_{\substack{q|c\\c>C}}\phi(c)F_{M,N}(c^2t,n)dt.
\end{multline*}
Since $C^2t<c^2t \leq 2M$, the sum over $c$ can be estimated as follows
\begin{equation*}
\sum_{\substack{q|c\\c>C}}\phi(c)F_{M,N}(c^2t,n)\ll\frac{1}{\sqrt{MN}}\frac{M}{qt}.
\end{equation*}
Next we apply $Y_0(x)\ll \log{x}$ and $J_1(x)\ll x$. Then
\begin{multline*}
\sum_{q|c}(1-\eta_C(c))c^{-2}T^{OD}
\ll_{\epsilon} \\(1+|\mu|)^B q^{\epsilon}\frac{N}{\sqrt{MN}}\int_{0}^{2M/C^2} t^{\epsilon}\frac{M}{qt} (aeNt)^{1/2}dt
\ll_{\epsilon} \\ (1+|\mu|)^B q^{\epsilon}(ae)^{1/2}\frac{MN}{qC}.
\end{multline*}

Finally, using \eqref{eq:C}, we obtain
\begin{multline*}
\sum_{M,N \ll q^{1+\epsilon}}\sum_{de=l}\frac{1}{d^{1/2}}\sum_{ab=d}\frac{\mu(a)}{a^{1/2}}\tau(b)
\sum_{q|c}(1-\eta_C(c))c^{-2}T^{OD}
\ll_{\epsilon} \\ (1+|\mu|)^B q^{\epsilon} l^{(5-4\theta)/(8-8\theta)}q^{-1/(8-8\theta)}.
\end{multline*}
\end{proof}

\begin{lem}\label{error3}
Let $C$ be defined by equation \eqref{eq:C}. For any $\epsilon>0$
\begin{multline}
\sum_{M,N \ll q^{1+\epsilon}} \sum_{de=l}\frac{1}{d^{1/2}}\sum_{ab=d}\frac{\mu(a)}{a^{1/2}}\tau(b)
\sum_{q|c}(1-\eta_C(c))c^{-2}T^{OOD}
\ll_{\epsilon} \\ (1+|\mu|)^B q^{\epsilon} l^{(5-4\theta)/(8-8\theta)}q^{-1/(8-8\theta)}.
\end{multline}
\end{lem}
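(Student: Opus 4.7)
The plan is to mirror, term for term, the argument used for Lemma \ref{error2}, treating the only genuinely new ingredient — the different Bessel kernel appearing in $T^{OOD}$ — as the point that needs verification. First I would write out the integral representation of $c^{-2}T^{OOD}$ from section~3.5 of \cite{KMV}: this is a $t$-integral of $\tau$-weighted Fourier coefficients against a product of Bessel-type functions, with the test function $F_{M,N}(c^2 t, n)$ tying $c$ to the support interval $[M/2, 3M]$ in the first variable. The cut-off $(1-\eta_C(c))$ together with $c^2 t \le 2M$ forces the $t$-integral to run over $[0, 2M/C^2]$, which is exactly the truncation we need.

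Next I would estimate the $c$-sum. Since $c$ ranges over multiples of $q$ with $c > C$ and $c^2 t \asymp M$, only $O(M/(q C^2 t))$ values contribute for each fixed $t$, and each contributes $F_{M,N} \ll (1+|\mu|)^B (MN)^{-1/2}$. This yields
\[
\sum_{\substack{q\mid c\\ c>C}} \phi(c) F_{M,N}(c^2 t, n) \ll \frac{1}{\sqrt{MN}}\cdot \frac{M}{qt},
\]
exactly as in the OD case.

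Then I would insert size estimates for the two Bessel-type factors in the OOD integrand. The key step — and the one requiring a short check — is that the kernel of the OOD transform admits the same crude bounds as the $Y_0 \cdot J_1$ product used in Lemma \ref{error2}: a logarithmic (hence $t^{\epsilon}$) bound on the slowly-varying factor and a linear bound $\ll (aeNt)^{1/2}$ on the vanishing-at-zero factor. With these bounds the $n$-sum contributes $\ll N$ after trivial estimation, and the remaining $t$-integral
\[
\int_0^{2M/C^2} t^{\epsilon} \cdot \frac{M}{qt}\cdot (aeNt)^{1/2}\, dt
\]
is of the same shape as in Lemma \ref{error2} and evaluates to $\ll (ae)^{1/2} MN / (qC)$ up to $q^{\epsilon}$ factors.

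Finally, I would carry out the remaining sums over $a, b, d, e$ with $de = l$, $ab = d$ — each of which is bounded by divisor functions absorbed into $q^\epsilon$ and yields $l^{1/2}$ as in the OD step — and dyadically sum over $M, N \ll q^{1+\epsilon}$. Substituting the optimal choice of $C$ from \eqref{eq:C} then produces the claimed bound $(1+|\mu|)^B q^\epsilon l^{(5-4\theta)/(8-8\theta)} q^{-1/(8-8\theta)}$. The principal obstacle is purely bookkeeping: confirming that the OOD Bessel kernel satisfies the same near-origin and logarithmic estimates as the OD kernel. Once this is in hand, the proof is structurally identical to that of Lemma \ref{error2}.
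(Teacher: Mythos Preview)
Your proposal is correct and follows exactly the argument that underlies the paper's proof. The paper's own proof is shorter only because it does not repeat the computation: it simply quotes the bound $\sum_{q\mid c}(1-\eta_C(c))c^{-2}T^{OOD}\ll_{\epsilon}(1+|\mu|)^B q^{\epsilon}(ae)^{1/2}MN/(qC)$ directly from \cite{KMV}, p.~111 (so the Bessel-kernel check you flag is already done there), and then substitutes the new choice of $C$ from \eqref{eq:C}.
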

\begin{proof}
According to \cite{KMV} page $111$ we have
\begin{equation*}
\sum_{q|c}(1-\eta_C(c))c^{-2}T^{OOD}\ll_{\epsilon} (1+|\mu|)^B q^{\epsilon}(ae)^{1/2}\frac{MN}{qC}.
\end{equation*}
Equation \eqref{eq:C}  yields the assertion.

\end{proof}

To sum up, the largest error terms in theorem \ref{mainthm} come from lemmas \ref{lemma:LSI}, \ref{error2}, \ref{error3} and equation $(26)$ of \cite{KMV}. In particular, the error term $O_{\epsilon}((1+|\mu|)^Bl^{17/8}q^{-1/4+\epsilon})$ is given by the second summand in $(26)$ of \cite{KMV}.


\section{Amplification and subconvexity}\label{amplifier}

Contribution of the main terms $M^{D}$,  $M^{OD}$,  $M^{OOD}$  in \cite{KMV} is  bounded by
\begin{equation}
O_{\epsilon}\left( (1+|\mu|)^Bq^{\epsilon}l^{-1/2}\right).
\end{equation}
According to theorem \ref{mainthm}, for $l<q^{\frac{1}{12-11\theta}}$ we have
\begin{multline}
\sum_{f \in H_{2}^{*}(q)}\frac{1}{4\pi \langle f,f\rangle_q} \lambda_f(l)|L(f,1/2+\mu)|^4\ll_{\epsilon, \mu} \\ q^{\epsilon} \left(l^{-1/2}+l^{\frac{5-6\theta}{8-8\theta}}q^{-\frac{1-2\theta}{8-8\theta}}\right).
\end{multline}

Let
\begin{equation}\label{eq:ampl}
\Lambda_f(\mathbf{c}):=\sum_{\substack{l \leq L\\ (l,q)=1}}c_l\lambda_f(l)
\end{equation}
be an amplifier. Then
\begin{multline}
\sum_{f \in H_{2}^{*}(q)}\frac{1}{4\pi \langle f,f\rangle_q} \Lambda_{f}^{2}(\mathbf{c})|L(f,1/2+\mu)|^4\ll_{\epsilon, \mu} \\
q^{\epsilon}\left(\|\mathbf{c}\|_{2}^{2}+L^{\frac{5-6\theta}{4-4\theta}}q^{-\frac{1-2\theta}{8-8\theta}} \|\mathbf{c}\|_{1}^{2}\right),
\end{multline}
where $\|\mathbf{c}\|_p$ denotes $l_p$--norm.

We choose coefficients $c_l$ as in \cite{DFI}, making $\Lambda_f(\mathbf{c})$ large for a particular form $f \in H_{2}^{*}(q)$, namely
\begin{equation}
c_{l}=\begin{cases}
\lambda_f(l) & \text{ if } l\text{ is prime } \leq L^{1/2}\\
-1 & \text{ if }l \text{ is a square of a prime} \leq L^{1/2}\\
0& \text{otherwise}.
\end{cases}
\end{equation}
Thus,
\begin{equation}
\Lambda_f(\mathbf{c})=\sum_{\substack{l \text{ prime } \leq L^{1/2}\\ (l,q)=1}}(\lambda_f(l)^2-\lambda_f(l^2)).
\end{equation}
Note that $\lambda_f(l)^2-\lambda_f(l^2)=1$ for prime $l$ such that $(l,q)=1$. Therefore,
\begin{equation}\label{eq:ampl1}
\Lambda_f(\mathbf{c})\sim 2 L^{1/2}(\log{L})^{-1}.
\end{equation}
By Deligne's bound
\begin{equation}\label{eq:ampl2}
\|\mathbf{c}\|_{2}^{2}\leq 5 \Lambda_f(\mathbf{c}) \text{ and } \|\mathbf{c}\|_{1}\leq 3 \Lambda_f(\mathbf{c}).
\end{equation}

The results of \cite{GHL} imply that
\begin{equation}\label{eq:ampl3}
\frac{1}{4\pi \langle f,f\rangle_q}\ll \frac{\log{q}}{q}.
\end{equation}

Taking $L=q^{\frac{2\theta-1}{2(8\theta-7)}}$ in  \eqref{eq:ampl} and applying  \eqref{eq:ampl1}, \eqref{eq:ampl2}, \eqref{eq:ampl3},   we have
\begin{equation}
L(f, 1/2+\mu)\ll_{\epsilon, \mu}\ q^{1/4-\delta}
\end{equation}
with $\delta=\frac{2\theta-1}{16(8\theta-7)}$.

\section{Mollification and simultaneous non-vanishing}

We follow section $5.2$ of \cite{KMV}. In order to determine the largest admissible length of mollifier $\Delta$, we sum the error terms in theorem \ref{mainthm} against $l^{-1/2+\epsilon}$ for $l<q^{2\Delta}$. This gives
\begin{equation}
q^{-\frac{1-2\theta}{8-8\theta}}q^{2\Delta\left( \frac{1-2\theta}{8-8\theta}+1\right)+\epsilon}+q^{-1/4}q^{21\Delta/4+\epsilon}+q^{-\frac{1}{8-8\theta}}q^{2\Delta\left(\frac{1}{8-8\theta}+1\right)+\epsilon}.
\end{equation}
Therefore, the error term is negligible for any $\Delta<\frac{1-2\theta}{2(9-10\theta)}$.

In order to change the harmonic mean into the natural average as defined by \eqref{eq:averages}, we apply results of section \ref{amplifier}.
Accordingly, condition $(82)$ of \cite{KMV} is satisfied for any $\Delta<\frac{1-2\theta}{4(7-8\theta)}$.

\begin{thm}
Let $M(f)$ be the mollifier defined by equation $(63)$ of \cite{KMV} with $P(x)=x^3$.  Let $F(\Delta)$ be defined by equation $(5)$ of \cite{KMV}.

For all
$0<\Delta_1<\frac{1-2\theta}{2(9-10\theta)}$ we have
\begin{equation}
\sum_{f \in H_{2}^{*}(q)}^{h}L(f,1/2)^4M(f)^4=(1+o(1))F(\Delta_1)\left(\frac{\zeta(2)}{\log{q}} \right)^4.
\end{equation}

For all
$0<\Delta_2<\frac{1-2\theta}{4(7-8\theta)}$ we have
\begin{equation}
\sum_{f \in H_{2}^{*}(q)}^{n}L(f,1/2)^4M(f)^4=(1+o(1))F(\Delta_2)\left(\frac{\zeta(2)}{\log{q}} \right)^4.
\end{equation}

\end{thm}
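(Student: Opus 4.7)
The plan is to follow the mollification strategy of Section~5.2 of \cite{KMV} essentially verbatim, feeding in the sharpened error term from Theorem~\ref{mainthm} in place of the weaker bound used there. First, expanding $M(f)^4$ produces a Dirichlet polynomial in $\lambda_f(l)$ supported on $l<q^{2\Delta}$, with explicit coefficients depending on $P(x)=x^{3}$. Multiplying by $L(f,1/2)^{4}$ and applying the harmonic average turns the left-hand side of the first formula into a weighted linear combination of the twisted fourth moments $M(l)$ specialized at $\mu=0$.

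The main-term bookkeeping is exactly as in \cite{KMV}: the three main terms $M^{D}(l)+M^{OD}(l)+M^{OOD}(l)$, weighted by the mollifier coefficients and summed over $l$, reproduce the combinatorial content that gives $F(\Delta_{1})(\zeta(2)/\log q)^{4}$. What changes relative to \cite{KMV} is only the admissible mollifier length, governed by how fast the error term in Theorem~\ref{mainthm} grows in $l$. Summing this error against $l^{-1/2+\epsilon}$ over $l<q^{2\Delta_{1}}$ gives exactly the three-term bound displayed in the paragraph preceding the theorem statement, and comparing this with $1$ shows that it is negligible precisely when $\Delta_{1}<\frac{1-2\theta}{2(9-10\theta)}$, the binding constraint coming from the first summand of the error in Theorem~\ref{mainthm}.

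For the second formula involving the natural average $\sum^{n}$, one invokes the harmonic-to-natural transition of \cite[\S 5.2]{KMV}, whose hypothesis $(82)$ requires pointwise control on $L(f,1/2)^{4}M(f)^{4}$ outside an $o(|H_{2}^{*}(q)|)$ set of exceptional forms. This control is supplied by the amplified fourth moment estimate of Section~\ref{amplifier}: inserting the DFI-type amplifier $\Lambda_{f}(\mathbf{c})$ of length $L$ and matching $L$ against the mollifier length $q^{2\Delta_{2}}$ using the displayed bound $\|\mathbf{c}\|_{2}^{2}+L^{(5-6\theta)/(4-4\theta)}q^{-(1-2\theta)/(8-8\theta)}\|\mathbf{c}\|_{1}^{2}$ yields precisely the constraint $\Delta_{2}<\frac{1-2\theta}{4(7-8\theta)}$.

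The main obstacle is not conceptual but arithmetic: one must track the $\theta$-dependence through the expansion of $M(f)^{4}$, the range of $l$ in the mollifier, and the amplifier optimization carefully enough to confirm that the two exponents $\frac{1-2\theta}{2(9-10\theta)}$ and $\frac{1-2\theta}{4(7-8\theta)}$ fall out cleanly from the optimizations already performed in the preceding sections, without hidden losses. Everything else, including the verification that contributions from the second and third summands of the error in Theorem~\ref{mainthm} do not exceed the budget set by the first summand, together with the main-term combinatorics yielding $F(\Delta)$, is inherited directly from the corresponding computations of \cite{KMV}.
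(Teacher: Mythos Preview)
Your proposal is correct and follows essentially the same approach as the paper: expand the mollifier, feed in the error term of Theorem~\ref{mainthm} summed against $l^{-1/2+\epsilon}$ over $l<q^{2\Delta}$ to obtain the constraint on $\Delta_1$, and then invoke the amplifier results of Section~\ref{amplifier} to verify condition~$(82)$ of \cite{KMV} for the harmonic-to-natural passage, yielding the constraint on $\Delta_2$. The only minor imprecision is your phrase ``matching $L$ against the mollifier length $q^{2\Delta_2}$''; what actually happens is that the subconvexity saving $\delta=\frac{1-2\theta}{16(7-8\theta)}$ from Section~\ref{amplifier} is inserted into condition~$(82)$, and the resulting inequality on $\Delta_2$ is exactly $\Delta_2<4\delta=\frac{1-2\theta}{4(7-8\theta)}$.
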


Taking $\theta=7/64$, we find that $\Delta_1<\frac{25}{566}=\frac{1}{22.64}$ and $\Delta_2<\frac{25}{784}=\frac{1}{31.36}$. This improves $\Delta_1<\frac{1}{30}$ and $\Delta_2<\frac{1}{48}$ proved in \cite{KMV}.

In particular, extention of admissible length of mollifier $\Delta_2$ gives a better lower bound on the  proportion of simultaneous non-vanishing
\begin{equation}
\sum_{\substack{f \in H_{2}^{*}(q)\\ L(f,1/2)L(f \otimes \chi, 1/2)\neq 0}} 1,
\end{equation}
where $\chi$ is a fixed primitive character of conductor $D$ such that $(D,q)=1$.
See Proposition $7.2$ of \cite{KMV} for the exact formulas.

\nocite{}

\end{document}